\newtheorem{theorem}{Theorem}[section]
\numberwithin{equation}{section}
\newtheorem{lemma}[theorem]{Lemma}
\newtheorem{proposition}[theorem]{Proposition}
\newtheorem{remark}[theorem]{Remark}
\numberwithin{equation}{section}
\def\N{\mathbb{N}}
\def\R{\mathbb{R}}
\def\C{\mathbb{C}}
\def\P{\mathbb{P}}
\def\E{\mathbb{E}}
\def\S{\mathbb{S}}
\def\B{\mathcal{B}}
\renewcommand{\phi}{\varphi}
\renewcommand{\epsilon}{\varepsilon}
\newcommand{\1}{{\text{\Large $\mathfrak 1$}}}
\newcommand{\vol}{\mathrm{vol}}
\def\C{{\mathcal C}}
\newcommand{\tdet}{T}
\newcommand\be{\begin{equation}}
\newcommand\ee{\end{equation}}
\begin{document}

\title{\bf An isoperimetric inequality for the Wiener sausage}

\author{
Yuval Peres\thanks{Microsoft Research, Redmond, Washington, USA; peres@microsoft.com} \and Perla Sousi\thanks{University of Cambridge, Cambridge, UK;   p.sousi@statslab.cam.ac.uk}
}
\maketitle
\begin{abstract}
Let $(\xi(s))_{s\geq 0}$ be a standard Brownian motion in $d\geq 1$ dimensions and let $(D_s)_{s \geq 0}$ be a collection of open sets in $\R^d$. For each $s$, let $B_s$ be a ball centered at $0$ with $\vol(B_s) = \vol(D_s)$.
We show that $\E[\vol(\cup_{s \leq t}(\xi(s) + D_s))] \geq \E[\vol(\cup_{s \leq t}(\xi(s) + B_s))]$, for all $t$.
In particular, this implies that the expected volume of the Wiener sausage increases when a drift is added to the Brownian motion.
\newline
\newline
\emph{Keywords and phrases.} Poisson point process, random walk, Brownian motion, coupling, rearrangement inequality.
\newline
MSC 2010 \emph{subject classifications.}
Primary   26D15, 
          60J65, 
Secondary 60D05, 
          60G55, 
          60G50. 
\end{abstract}

\section{Introduction}
Let $(\xi(t))_{t \geq 0}$ be a standard Brownian motion in $\R^d$. For any open set $A \subset \R^d$, the corresponding Wiener sausage at time $t$ is $\cup_{s \leq t}(\xi(s) + A)$.

Kesten, Spitzer and Whitman(1964) (see \cite[p.\ 252]{ItoMckean}) proved that for $d \geq 3$, if $A \subset \R^d$ is an open set with finite volume, then the Wiener sausage satisfies
\[
\frac{\E[\vol(\cup_{s \leq t} (\xi(s)+ A))]}{t} \to \text{Cap}(A) \text{ as } t \to \infty,
\]
where $\text{Cap}(A)$ is the Newtonian capacity of the set $A$.

P\'olya and Sz\"ego (see \cite{PoSz}) proved that for $d \geq 3$, among all open sets of fixed volume, the ball has the smallest Newtonian capacity.
Thus, for any open set $A \subset \R^d$ of finite volume,
\begin{align}\label{eq:polyaszego}
\E [\vol(\cup_{s \leq t}(\xi(s) + A))] \geq (1 - o(1)) \E[\vol(\cup_{s \leq t}(\xi(s) + B))], \text{ as } t \to \infty,
\end{align}
where $B$ is a ball with $\vol(B) = \vol(A)$.

This naturally raises the question whether \eqref{eq:polyaszego} holds for fixed $t$ without the $1 - o(1)$ factor. Our main result gives a positive answer in the more general setting where the set $A$ is allowed to vary with time.

\begin{theorem}\label{thm:wienersausage}
Let $(\xi(s))_{s\geq 0}$ be a standard Brownian motion in $d\geq 1$ dimensions and let $(D_s)_{s \geq 0}$ be open sets in $\R^d$. For each $s$, let $r_s>0$ be such that $\vol(\B(0,r_s)) = \vol(D_s)$. Then for all $t$ we have that
\[
\E\left[\vol\left(\cup_{s \leq t}\left( \xi(s) + D_s  \right)\right)\right] \geq \E\left[\vol\left(\cup_{s \leq t}\B(\xi(s),r_s)\right)\right].
\]
\end{theorem}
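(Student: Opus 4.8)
The plan is to discretise both the Brownian path and the sets $D_s$, reducing the statement to a purely combinatorial rearrangement inequality about unions of random walk translates of finite sets, and then pass to the limit. More precisely, fix $t$, a large integer $n$, and let $X_1, X_2, \dots$ be the increments $\xi(k/n) - \xi((k-1)/n)$, i.e.\ i.i.d.\ $\mathcal N(0, I/n)$ random vectors; write $S_k = X_1 + \cdots + X_k$ for the partial sums. Approximate the sausage $\cup_{s \le t}(\xi(s) + D_s)$ by the finite union $\cup_{k \le tn}(S_k + D_{k/n})$ (taking $D_s$ along the grid; one must also replace $D_s$ by nice inner approximations, but that is routine), and similarly for the balls. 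It therefore suffices to prove, for \emph{arbitrary} bounded open (or measurable) sets $A_1, \dots, A_m$ with balls $B_1, \dots, B_m$ of equal volume, and \emph{arbitrary} i.i.d.\ (or even just independent, spherically symmetric) increments, the inequality
\[
\E\!\left[\vol\!\left(\bigcup_{k=1}^m (S_k + A_k)\right)\right] \;\ge\; \E\!\left[\vol\!\left(\bigcup_{k=1}^m (S_k + B_k)\right)\right].
\]

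The engine for the discrete inequality is the Brascamp--Lieb--Luttinger / Riesz multiple rearrangement inequality. Using $\vol(\cup_k E_k) = \int (1 - \prod_k \1_{E_k^c}(x))\,dx$ and Fubini, one writes the left-hand side as $\vol(\text{bounding box}) - \int \prod_{k=1}^m \big(1 - \1_{A_k}(x - S_k)\big)$ integrated against the law of $(X_1,\dots,X_m)$. Expanding the law as a product of spherically symmetric densities $g$ (the Gaussian density of $X_j$) and changing variables, the quantity to be minimised becomes a multiple integral of the form $\int \cdots \int \prod_j g(y_j)\, f_0(x)\prod_{k=1}^m f_k(x - y_1 - \cdots - y_k)\, dx\, dy_1\cdots dy_m$, where each $f_k = \1_{A_k^c}$ or $\1_{A_k}$ as appropriate and $f_0 \equiv 1$ on the box. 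This is exactly a Brascamp--Lieb--Luttinger functional: the integrand is a product of functions each evaluated at a linear form in the integration variables. Their theorem says that symmetric decreasing rearrangement of every factor can only increase such an integral; since $g$ is already its own rearrangement, $\1_{A_k^c}$ rearranges to $\1_{(B_k)^c}$ up to the constant, and $\1_{A_k}$ rearranges to $\1_{B_k}$, we get that replacing all the $A_k$ by the balls \emph{increases} the probability that a point is missed, hence \emph{decreases} the expected volume of the union — which is the claim. One has to handle the additive constants and the box carefully (e.g.\ write $1 - \1_{A_k} = \1_{A_k^c}$ and peel off the box as a large ball, or work with $\1_{A_k}$ directly using the inclusion--exclusion form), but the structural point is that every set appears as a single indicator of a single linear form, so BLL applies.

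The main obstacle, and the step that needs the most care, is the interchange of the rearrangement argument with the discretisation and limiting procedure. Several issues must be checked: (i) monotone/dominated convergence of $\vol(\cup_{k \le tn} (S_k + D_{k/n}))$ to $\vol(\cup_{s\le t}(\xi(s)+D_s))$ as $n \to \infty$ — this requires some regularity of $s \mapsto D_s$, or else the theorem should be interpreted with $D_s$ replaced by suitable hulls, and the paper's hypotheses/notion of the union (which is a random open set, measurable by continuity of $\xi$) must be pinned down; (ii) the corresponding convergence for the balls, where $r_{k/n} \to r_s$ must be controlled; (iii) reducing from general open sets of possibly infinite-measure unions to bounded ones by exhaustion, using that the bound is uniform. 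A clean route is to first prove the finite-union, bounded-set inequality with fixed deterministic times via BLL, then use that Brownian motion restricted to $[0,t]$ is a.s.\ uniformly continuous to squeeze the continuous-time union between discrete unions over refining partitions, and finally apply monotone convergence; the rearrangement inequality itself is "for free" once the problem is put in BLL form, so essentially all the real work is in this approximation bookkeeping. I expect the discretisation lemma — showing the discrete union converges to the continuous Wiener sausage in expectation — to be the technical heart of the argument.
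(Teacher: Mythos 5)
Your overall architecture (discretise the path, prove a rearrangement inequality for a random walk with spherically symmetric increments, then pass to the limit using continuity of Brownian motion and monotone convergence) matches the paper's. But the step you describe as ``for free'' --- the discrete inequality via Brascamp--Lieb--Luttinger in $\R^d$ --- is precisely where the argument breaks, and it is the reason the paper does not argue this way. The avoidance probability is $\E\bigl[\prod_k \1_{A_k^{\mathrm{c}}}(x - S_k)\bigr]$, and BLL increases such an integral when each factor is replaced by its \emph{symmetric decreasing} rearrangement. The rearrangement of $\1_{A_k^{\mathrm{c}}}$ is identically $1$ (the set $A_k^{\mathrm{c}}$ has infinite volume), and if you truncate to a large box $Q$, the rearrangement of $\1_{A_k^{\mathrm{c}} \cap Q}$ is the indicator of a \emph{centered ball} of volume $\vol(Q) - \vol(A_k)$ --- not the indicator of $B_k^{\mathrm{c}} \cap Q$, which is a symmetric \emph{increasing} set. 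So BLL delivers an upper bound by the wrong configuration and says nothing about balls. The alternative you mention, expanding $\prod_k(1 - \1_{A_k})$ by inclusion--exclusion and applying BLL term by term, also fails: the terms carry alternating signs, so the termwise inequalities do not combine. In $\R^d$ the class of centered balls is not closed under complementation, and there is no way to phrase ``complement of a ball'' as a symmetric decreasing rearrangement; this is a genuine obstruction, not bookkeeping.

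The paper circumvents exactly this by lifting the problem to a large sphere $\S_R$ and invoking the Burchard--Schmuckenschl\"ager rearrangement inequality (the spherical analogue of BLL). On the compact sphere the complement $\Theta_k^{\mathrm{c}}$ has finite measure, its rearrangement is a geodesic cap centered at the north pole, and the complement of that cap is again a cap (centered at the south pole) --- so the family of caps \emph{is} closed under complementation, and the rearranged avoidance probability corresponds to avoiding caps, i.e.\ balls after projection. One then couples the spherical walk (with uniform-cap increments, matching the paper's choice of uniform-ball increments in $\R^d$ rather than your Gaussian ones) with the Euclidean walk and lets $R \to \infty$. Without this detour through the sphere, or some substitute for it, your discrete inequality is unproved; and since everything downstream rests on it, the proposal has a gap at its core. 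Your part (i)--(iii) approximation concerns are real but are handled in the paper by eroding the sets $D_s$, taking unions over dyadic time windows, and using L\'evy's modulus of continuity; they are not where the main difficulty lies.
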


Our original motivation for Theorem~\ref{thm:wienersausage} came from our joint work with A.~Sinclair and A.~Stauffer \cite{PSSS10}. In \cite{PSSS10} it was proved that in dimension $2$ for any continuous function $f:\R_+ \to \R^2$,
\[
\E[\vol(\cup_{s \leq t}\B(\xi(s)+f(s),r))] \geq (1-o(1))\E[\vol(\cup_{s \leq t}\B(\xi(s),r))], \text{ as } t \to \infty
\]
and it was conjectured that for all $d \geq 1$ and for any continuous function $f:\R_+ \to \R^d$,
\begin{align}\label{eq:drift}
\E[\vol(\cup_{s \leq t}\B(\xi(s)+f(s),r))] \geq \E[\vol(\cup_{s \leq t}\B(\xi(s),r))],
\end{align}
where $\B(x,r)$ stands for the open ball centered at $x$ of radius $r$.

In dimension $1$ this conjecture was shown in \cite{PSSS10} to follow from the reflection principle. Theorem~\eqref{thm:wienersausage} above establishes \eqref{eq:drift} without requiring $f$ to be continuous or even measurable, by taking $D_s = \B(f(s),r)$.

\begin{figure}
\begin{center}
\subfigure[Wiener sausage with squares]{
\epsfig{file=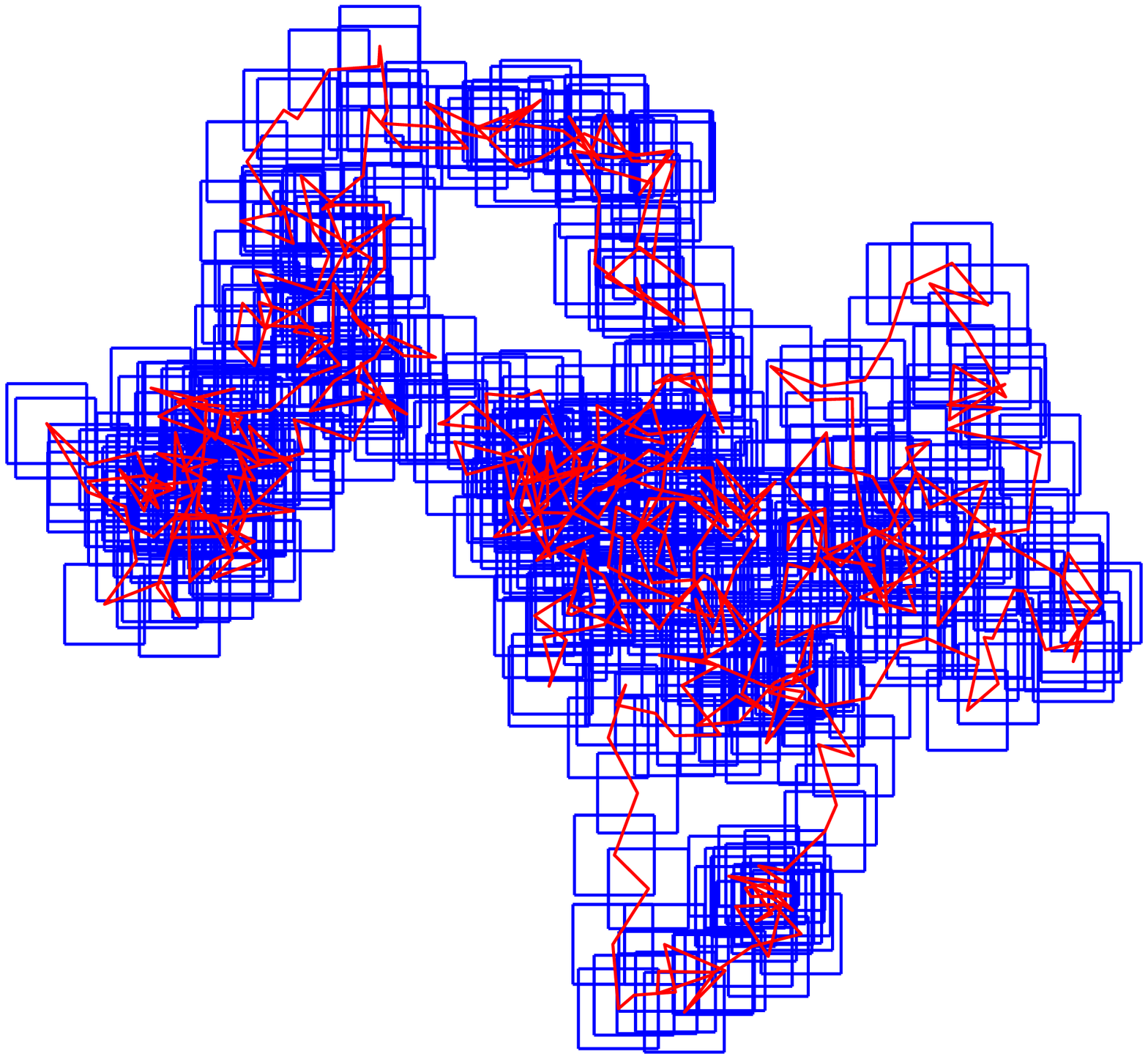,height=6cm}
}
\subfigure[Wiener sausage with discs]{
\epsfig{file=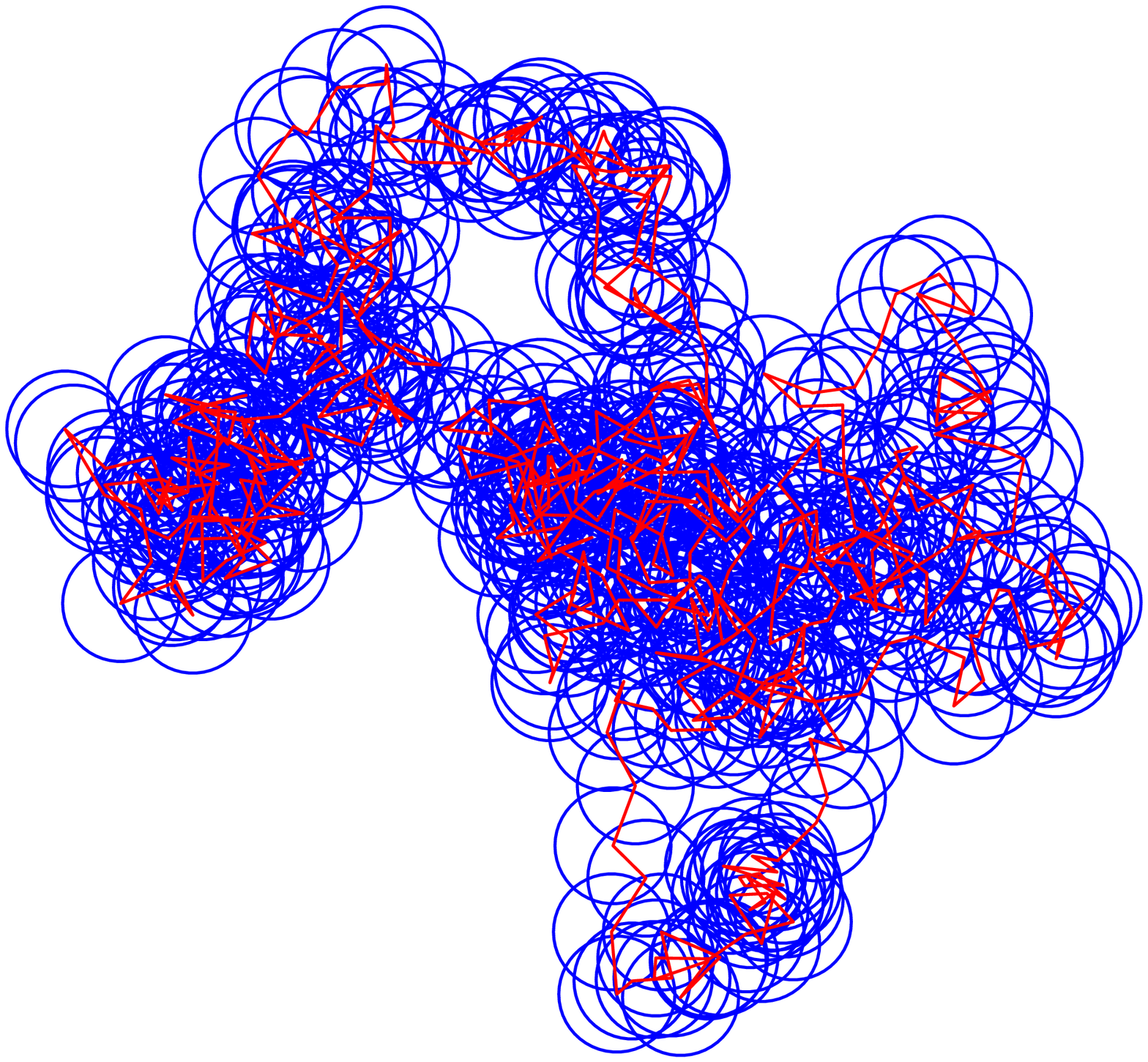,height=6cm}
}
\end{center}
\end{figure}

\begin{remark}
\rm{
For any collection of open sets $(D_s)$ the set $\cup_{s \leq t}(\xi(s) + D_s)$ is open, and hence Lebesgue measurable. Its volume, namely $\vol(\cup_{s \leq t} (\xi(s) + D_s))$, is a random variable. This is explained at the end of the proof of Theorem~\ref{thm:wienersausage} in Remark~\ref{rem:measure}.
}
\end{remark}

\begin{remark}
\rm{
The Wiener sausage determined by a ball, i.e.\ $W(t) = \cup_{s \leq t}\B(\xi(s),r)$, has been studied extensively. Spitzer \cite{Spitzer} obtained exact asymptotics as $t \to \infty$ for the expected volume of the Wiener sausage in $2$ and $3$ dimensions
and Donsker and Varadhan \cite{DonVar} obtained exact asymptotics for the exponential moments of the volume. Kesten, Spitzer and Whitman (see \cite{Spitzer} and \cite{Whitman}) proved laws of large numbers type results and
Le Gall in \cite{LeGall} proved fluctuation results corresponding to these laws of large numbers.
}
\end{remark}

\begin{remark}
\rm{
In \cite{MOBC} (see also \cite[Corollary~2.1]{DGRS}) a result analogous to \eqref{eq:drift} for random walks is proved, namely that the expected range of a lattice walk always increases when a drift is added to the walk. The proof of that result and our proof of Theorem~\ref{thm:wienersausage} do not seem to yield each other.
}
\end{remark}

\begin{remark}
\rm{
A result analogous to \eqref{eq:drift} was proved in \cite{PS10} for the Hausdorff dimension of the image and the graph of $\xi+f$, where $f$ is a continuous function. Namely, in \cite{PS10} it is proved that
\[
\dim (\xi+f)[0,1] \geq \max\{\dim \xi[0,1], \dim f[0,1]\} \text{ a.s.}
\]
and similarly for the dimension of the graph.
}
\end{remark}

To prove Theorem~\ref{thm:wienersausage}, we establish an analogous statement for certain random walks, which is of independent interest:
\begin{proposition}\label{lem:discrete}
Let $\epsilon>0$ and let $(z(k))_{k\geq 0}$ be a random walk in $\R^d$ with transition kernel given by
\begin{align}\label{eq:kernel}
p(x,y) = \frac{\1(\|x-y\|<\epsilon)}{\epsilon^d \omega(d)},
\end{align}
where $\omega(d)$ is the volume of the unit ball in $\R^d$.
For any integer $n$ and any collection of open sets $(U_k)_{k\geq 0}$ in $\R^d$ we have that
\begin{align}\label{eq:sausagerw}
\E\left[\vol \left(\cup_{k=0}^{n} \left(z(k) + U_k\right)\right)\right]
\geq \E\left[\vol \left(\cup_{k=0}^{n} \B(z(k),r_k)\right)\right],
\end{align}
where $r_k$ is such that $\vol(\B(0,r_k)) = \vol(U_k)$.
\end{proposition}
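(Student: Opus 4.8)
The plan is to prove the statement by induction on $n$, using a conditioning argument that reduces the inductive step to a one-step rearrangement inequality. To set this up, I would first condition on the first step of the walk: writing $z(k) = z(0) + \sum_{j=1}^k X_j$ where the $X_j$ are i.i.d.\ uniform on $\B(0,\epsilon)$, I condition on $z(0) = x$ and on $X_1 = v$. Given these, the remaining walk $z(1), \dots, z(n)$ is again a walk with the same kernel started from $x+v$. The inductive hypothesis applied to the sets $U_1, \dots, U_n$ (reindexed) would give that, conditionally, replacing each $U_k$ by $\B(0,r_k)$ can only decrease the expected volume of $\cup_{k=1}^n(z(k)+U_k)$. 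The issue is that the union at hand is $\cup_{k=0}^n(z(k)+U_k)$, which already contains the fixed set $x + U_0$; so what I actually need is the inductive statement in the slightly stronger form ``for any fixed open set $G$, $\E[\vol(G \cup \bigcup_{k=0}^n(z(k)+U_k))] \geq \E[\vol(G \cup \bigcup_{k=0}^n \B(z(k),r_k))]$,'' which follows from the same induction since adding $G$ to every union throughout does not disturb the argument. Thus I would state and prove this ``$G$-augmented'' version.

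Granting the inductive hypothesis in the $G$-augmented form, the induction step splits into two pieces. First, using the conditioning above with $G' = (x+U_0) \cup G$ (or, in the reduced problem, with $G'$ the relevant fixed set), the inductive hypothesis lets me replace $U_1, \dots, U_n$ by balls $\B(0,r_1), \dots, \B(0,r_n)$ one collection at a time, leaving only the $k=0$ term to deal with. Second, I am left with comparing $\E[\vol(G \cup (x+U_0) \cup \bigcup_{k=1}^n \B(z(k),r_k))]$ against $\E[\vol(G \cup \B(x,r_0) \cup \bigcup_{k=1}^n \B(z(k),r_k))]$, i.e.\ replacing the single set $U_0$ at the origin by the ball of equal volume. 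This last comparison is exactly where the rearrangement inequality enters, and it is the heart of the matter.

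For that core step I would invoke the Brascamp--Lieb--Luttinger / Riesz rearrangement inequality. The key observation is that $\vol\big(A \cup \bigcup_{k=1}^n \B(z(k),r_k)\big) = \vol(\R^d) - \vol\big(A^c \cap \bigcap_{k=1}^n \B(z(k),r_k)^c\big)$ restricted suitably, or better: complement everything inside a large box and write the volume of the complement of the union as an integral of a product of indicator functions $\prod_k \1_{(z(k)+\text{set}_k)^c}$. Then, integrating against the product of the transition densities $p(z(k-1), z(k)) = c\,\1(\|z(k)-z(k-1)\| < \epsilon)$, the whole expectation becomes a multiple integral of a product of functions, each of which is (a shift/reflection of) either an indicator of a ball or the indicator of the complement of a translate of $U_0$ or of $\B(0,r_0)$. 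Since the uniform kernel's density is itself a radially symmetric decreasing function (an indicator of a ball) and $\vol(\B(0,r_0)) = \vol(U_0)$ means $\1_{\B(0,r_0)^c}$ dominates the symmetric decreasing rearrangement relationship of $\1_{U_0^c}$ in the appropriate sense, the Brascamp--Lieb--Luttinger inequality yields that symmetrizing $U_0$ to the ball increases the integral of the complement's indicator, hence decreases the expected volume of the union.

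The main obstacle I anticipate is twofold. First, care is needed because volumes are infinite: the union $\cup_k(z(k)+U_k)$ has finite volume but its complement does not, so I would work inside a fixed large cube $Q$ (large enough to contain everything relevant with the bounded-range walk), prove the inequality there, and let $Q \uparrow \R^d$ by monotone convergence. Second, and more seriously, the clean product structure needed for Brascamp--Lieb--Luttinger requires expressing $\vol(\bigcap_k E_k^c)$ — with the $E_k$ translates of balls and of $U_0$ — as $\int \prod_k \1_{E_k^c}(y)\,dy$, which is fine, but then pulling the $z(k)$-integrations (against the uniform kernels) inside and recognizing the resulting multiple integral as one to which the rearrangement inequality applies demands checking that the geometry of the chain of increments is compatible with the BLL hypotheses (the inequality handles integrands of the form $\prod_i f_i(\sum_j b_{ij} u_j)$ for a matrix $(b_{ij})$). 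I expect this to work because the chain structure $z(k) = x + \sum_{j \leq k} X_j$ is linear in the increments, so everything is of the required form; but verifying the details, and in particular handling the fixed set $G$ (whose indicator is not radially symmetric, yet only needs to be carried along passively in the induction rather than symmetrized) is the step that requires the most care.
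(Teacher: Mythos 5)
Your proposal has two genuine gaps, and the second one is precisely the obstruction that forces the paper onto a different route.

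First, the ``$G$-augmented'' inductive statement is false, so the induction collapses at its base. Take the walk trivial ($n=0$, $z(0)=x$ deterministic), let $U_0$ be a thin annulus $\{1<\|u\|<1+\delta\}$, and let $G=x+U_0$. Then $\vol(G\cup(x+U_0))=\vol(U_0)$, while $\vol(G\cup\B(x,r_0))=2\vol(U_0)$ since $\B(0,r_0)$ is disjoint from the annulus for small $\delta$. So $\E[\vol(G\cup\bigcup_k(z(k)+U_k))]<\E[\vol(G\cup\bigcup_k\B(z(k),r_k))]$, the reverse of what you need. The underlying issue is that symmetrizing the sets one at a time while holding the others fixed is not a valid operation; the rearrangement must be performed on all the sets simultaneously. (Even the reduced ``replace only $U_0$, all else already balls'' step is not elementary: it amounts to showing that $y\mapsto\P(\exists k:\|y-z(k)\|<r_k)$ is nonincreasing in $\|y-x\|$, which is essentially as hard as the proposition itself.)

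Second, the core Brascamp--Lieb--Luttinger step does not deliver the comparison you want, even after truncating to a large cube $Q$. Writing the avoidance probability as $\int\prod_k\1_{U_k^{\mathrm{c}}\cap Q}(x_k)\prod_k p(x_{k-1},x_k)\,dx$ and applying BLL rearranges each $\1_{U_k^{\mathrm{c}}\cap Q}$ into the indicator of a ball \emph{centered at the origin} of volume $\vol(Q)-\vol(U_k)$ --- not into $\1_{Q\setminus\B(0,r_k)}$. So BLL upper-bounds the avoidance probability by the probability that the walk stays inside large concentric balls, which is a different (and irrelevant) quantity from the probability of avoiding the small balls $\B(0,r_k)$. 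The complement of a set of finite volume has no symmetric decreasing rearrangement in $\R^d$ that is again the complement of a ball, and this is exactly why the paper compactifies: it lifts the walk to the sphere $\S_R$, applies the Burchard--Schmuckenschl\"ager multi-set rearrangement there (where the rearrangement of $\Theta_k^{\mathrm{c}}$ is a cap at the north pole, whose complement is a cap at the south pole, projecting to a ball in $\R^d$), and then recovers the Euclidean statement by coupling the projected spherical walk with $z$ and letting $R\to\infty$. Your plan as written cannot be repaired without importing some equivalent of that compactification (or another device that handles all sets and all complements at once).
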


In the next section we first state a rearrangement inequality in Theorem~\ref{thm:rearrang}, taken from \cite[Theorem~2]{BurSch}, and apply it to random walks on the sphere. Then we prove Proposition~\ref{lem:discrete}.
\newline
In Section~\ref{sec:actualproof}, using Proposition~\ref{lem:discrete} and Donsker's invariance principle, we give the proof of Theorem~\ref{thm:wienersausage}.
\newline
In Section~\ref{sec:convergence} we collect some easy convergence lemmas.
\newline
Finally, in Section~\ref{sec:conclusion} we conclude with some open questions and remarks.

\section{Rearrangement inequalities and proof of Proposition~\ref{lem:discrete}}\label{sec:proof}

Let $\S$ denote a sphere in $d$ dimensions. We fix $x^* \in \S$. For a subset $A$ of $\S$ we define $A^*$ to be a geodesic cap centered at $x^*$ such that $\mu(A^*) = \mu(A)$, where $\mu$ is the surface area measure on the sphere. We call $A^*$ the symmetric rearrangement of $A$.

The following theorem is a special case of Burchard and Schmuckenschl\"ager \cite[Theorem~2]{BurSch}.
\begin{theorem}[\cite{BurSch}]\label{thm:rearrang}
Let $A_1,\ldots,A_n$ be subsets of\, $\S$ and let $\psi_{ij}:\S\times \S \to \R_+$ be nonincreasing functions of distance.
Then we have the following inequality
\begin{align*}
\int_{\S}\ldots \int_{\S} \prod_{1 \leq i \leq n} \1(x_i \in A_i) &\prod_{1\leq i <j\leq n}\psi_{ij}(x_i,x_j)\,d\mu(x_1)\ldots\,d\mu(x_n) \\
& \leq \int_{\S}\ldots \int_{\S} \prod_{1 \leq i \leq n} \1(x_i \in A_i^*) \prod_{1\leq i <j \leq n}\psi_{ij}(x_i,x_j)\,d\mu(x_1)\ldots\,d\mu(x_n).
\end{align*}
\end{theorem}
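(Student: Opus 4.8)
The plan is to prove Theorem~\ref{thm:rearrang} by \emph{two-point symmetrization} (polarization), in the spirit of Baernstein and Taylor: I would show that the functional
\[
I(A_1,\dots,A_n)=\int_{\S}\cdots\int_{\S}\prod_{i}\1(x_i\in A_i)\prod_{i<j}\psi_{ij}(x_i,x_j)\,d\mu(x_1)\cdots d\mu(x_n)
\]
does not decrease under a single polarization, and then drive the sets to their cap rearrangements by iterating polarizations. Fix a hyperplane through the centre of $\S$, oriented so that $x^*$ lies on the positive side, let $\sigma$ be the reflection of $\R^d$ across it (an isometry of $\S$ preserving $\mu$), and let $\S^+$ and $\S^-=\sigma\S^+$ be the two closed hemispheres. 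The polarization of a set $A$ is
\[
A^\sigma=\big((A\cup\sigma A)\cap\S^+\big)\cup\big((A\cap\sigma A)\cap\S^-\big),
\]
which satisfies $\mu(A^\sigma)=\mu(A)$ and pushes mass toward the favoured hemisphere $\S^+$ containing $x^*$.

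The heart of the argument is the single-polarization inequality $I(A_1,\dots,A_n)\le I(A_1^\sigma,\dots,A_n^\sigma)$. To prove it I would fold every integral onto $\S^+$ using $\int_\S f\,d\mu=\int_{\S^+}(f(x)+f(\sigma x))\,d\mu(x)$, which rewrites $I$ as an integral over $(\S^+)^n$ of $\sum_{\eta\in\{0,1\}^n}\prod_i\1(\sigma^{\eta_i}x_i\in A_i)\,K_\eta(x)$, where $\sigma^0=\mathrm{id}$, $\sigma^1=\sigma$, and $K_\eta(x)=\prod_{i<j}\psi_{ij}(\sigma^{\eta_i}x_i,\sigma^{\eta_j}x_j)$. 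The elementary but essential geometric fact is that for $x,y\in\S^+$ one has $\|x-y\|\le\|x-\sigma y\|$ (reflecting $y$ to the far hemisphere only increases its distance to $x$), while $\sigma$ leaves all distances invariant; consequently the weight $\psi_{ij}(\sigma^{\eta_i}x_i,\sigma^{\eta_j}x_j)$ depends only on whether $\eta_i=\eta_j$, equalling $S_{ij}:=\psi_{ij}(x_i,x_j)$ when the signs agree and the smaller value $L_{ij}:=\psi_{ij}(x_i,\sigma x_j)\le S_{ij}$ when they differ. Since each factor $\1(\cdot\in A_i)$ is a $0/1$ pair, polarization sorts the pair $(\1(x_i\in A_i),\1(\sigma x_i\in A_i))$ into nonincreasing order, so for fixed $x$ the claim reduces to a purely finite inequality: forcing the constrained sign coordinates into the favoured hemisphere (the all-equal pattern) increases the admissible sum $\sum_\eta\,K_\eta$.

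This finite inequality is the main obstacle, and I would prove it as a correlation inequality of FKG type. With the $2\times2$ interaction matrix $\bigl(\begin{smallmatrix} S & L\\ L & S\end{smallmatrix}\bigr)$, where $S\ge L\ge0$, the quantity $\sum_{\eta}\prod_i c_i(\eta_i)K_\eta$ is exactly the partition function of a ferromagnetic pair-interaction model on $\{1,\dots,n\}$ with the indicators acting as pinning fields, and $K_\eta$ depends only on the partition of the indices induced by $\eta$, assigning larger weight to coarser partitions. The required statement — that pinning the constrained coordinates to a common value dominates pinning them to any mixed pattern — then follows from the FKG inequality, or equivalently by induction on the number of constrained coordinates, repeatedly using $S_{ij}-L_{ij}\ge0$; the simplest nontrivial instance is $S_{12}(S_{13}S_{23}+L_{13}L_{23})\ge L_{12}(S_{13}L_{23}+L_{13}S_{23})$, which already exhibits the product-of-$(S-L)$ structure that drives the general case.

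Finally I would pass from polarization to the cap rearrangement. Applying the same $\sigma$ to all $A_i$ simultaneously keeps $I$ monotone by the above, so it remains to choose a sequence of polarizers $\sigma_1,\sigma_2,\dots$ whose iterated action sends each $A_i$ to its cap rearrangement $A_i^*$, all centred at the common point $x^*$. This is where care is needed: I would invoke the standard convergence theorem for iterated polarizations, in the form guaranteeing a single universal sequence of polarizers along which the iterates of \emph{every} measurable set converge in $L^1(\mu)$ to its symmetric rearrangement, thereby handling all $n$ sets at once. Since each $\psi_{ij}$ is a nonincreasing function of distance on the compact sphere it is bounded, so $I$ is continuous with respect to $L^1(\mu)$-convergence of the indicators; combining this continuity with the monotonicity along the sequence yields $I(A_1,\dots,A_n)\le I(A_1^*,\dots,A_n^*)$, which is the assertion. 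The two delicate points, to be treated with care, are the finite FKG inequality of the previous paragraph and the simultaneous convergence of the polarization iterates.
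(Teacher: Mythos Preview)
The paper does not prove Theorem~\ref{thm:rearrang} at all: it is quoted verbatim from Burchard and Schmuckenschl\"ager \cite{BurSch} and used as a black box in the proof of Lemma~\ref{lem:detsphere}. So there is no ``paper's own proof'' to compare against; your proposal is a self-contained argument for a cited result. That said, the route you outline---two-point symmetrization followed by a limit of iterated polarizations---is essentially the method of \cite{BurSch}, so in that sense you are reconstructing the original proof rather than giving an alternative one.

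Your sketch is sound. The FKG step, which you rightly flag as the crux, does go through: the weight $K_\eta=\prod_{i<j}\bigl(L_{ij}+(S_{ij}-L_{ij})\1(\eta_i=\eta_j)\bigr)$ is a ferromagnetic pair interaction, hence satisfies the FKG lattice condition, and the required comparison reduces (via the global spin-flip symmetry of $K_\eta$) to $P(\eta_Q\equiv 0\mid \eta_P\equiv 0)\ge P(\eta_Q\equiv 0)=\tfrac12$, which is exactly positive association of the increasing events $\{\eta_P\equiv 0\}$ and $\{\eta_Q\equiv 0\}$. One small gap: you assert that each $\psi_{ij}$ is bounded because $\S$ is compact, but a nonincreasing function of distance may blow up at the diagonal. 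This is easily repaired by truncating $\psi_{ij}$ at height $M$, applying your argument, and letting $M\to\infty$ via monotone convergence; you should say so explicitly. The simultaneous $L^1$-convergence of polarization iterates for finitely many sets is standard (and a diagonal argument would also suffice here since $n$ is fixed).
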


Let $\S_R \subset \R^{d+1}$ be the sphere of radius $R$ in $d+1$ dimensions centered at $0$, i.e.\
\begin{align}\label{eq:defsph}
\S_R = \{(x_1, \ldots, x_{d+1}) \in \R^{d+1}: \sum_{i=1}^{d+1} x_i^2  = R^2 \}.
\end{align}
Let $\epsilon>0$ and let $\tilde{\zeta}$ be a random walk on the sphere that starts from a uniform point on the surface of the sphere and has transition kernel given by
\begin{align}\label{eq:sphkern}
\psi(x,y) = \frac{\1(\rho(x,y)<\epsilon)}{\mu(\C(x,\epsilon))}.
\end{align}
Again $\mu$ stands for the surface area measure on the sphere, $\rho(x,y)$ stands for the geodesic distance between $x$ and $y$ and
\[
\C(x,\epsilon) = \{z \in \S_R: \rho(x,z)<\epsilon \}.
\]
For a collection $(\Theta_k)_{k\geq 0}$ of subsets of $\S_R$ we define
\[
\tau^{\Theta}= \inf\{k \geq 0:  \tilde{\zeta}(k) \in \Theta_k \}.
\]

\begin{lemma}\label{lem:detsphere}
Let $(\Theta_k)_{k\geq 0}$ be subsets of the sphere, $\S_R$. Then for all $n$ we have that 
\begin{align}
\P(\tau^{\Theta}>n) \leq \P(\tau^{\C}>n),
\end{align}
where for each $k$ we define $\C_k$ to be a geodesic cap centered at $(0,\ldots,0,-R)$ such that
$\mu(\C_k) = \mu(\Theta_k)$.
\end{lemma}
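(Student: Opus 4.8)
The plan is to express $\P(\tau^\Theta > n)$ as an $(n{+}1)$-fold integral over the sphere against a product of kernels, and then apply the Burchard--Schmuckenschl\"ager rearrangement inequality (Theorem~\ref{thm:rearrang}) with the right choice of sets and functions. Concretely, since $\tilde\zeta$ starts uniformly and moves by the kernel $\psi$, for any fixed collection of sets we can write
\[
\P(\tau^\Theta > n) = \frac{1}{\mu(\S_R)}\int_{\S_R}\!\!\cdots\!\int_{\S_R} \prod_{k=0}^{n}\1\!\left(x_k \in \S_R\setminus\Theta_k\right)\ \prod_{k=0}^{n-1}\psi(x_k,x_{k+1})\ \,d\mu(x_0)\cdots d\mu(x_n),
\]
because the event $\{\tau^\Theta > n\}$ is exactly $\{\tilde\zeta(k)\notin\Theta_k \text{ for all } 0\le k\le n\}$. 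The complement $\S_R\setminus\Theta_k$ plays the role of $A_k$ in Theorem~\ref{thm:rearrang}, and the pair interactions $\psi_{ij}$ are taken to be $\psi_{k,k+1}(x,y)=\psi(x,y)$ for consecutive indices and $\psi_{ij}\equiv 1$ otherwise; all of these are nonincreasing functions of the geodesic distance since $\psi(x,y)=\1(\rho(x,y)<\epsilon)/\mu(\C(x,\epsilon))$ is a nonincreasing step function of $\rho(x,y)$ (here $\mu(\C(x,\epsilon))$ does not depend on $x$ by rotational symmetry of the sphere, so it is genuinely a function of distance alone).

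Next I would apply Theorem~\ref{thm:rearrang} to conclude that this integral is bounded above by the same integral with each $A_k=\S_R\setminus\Theta_k$ replaced by its symmetric rearrangement $A_k^*$. The symmetric rearrangement here is with respect to a fixed point $x^*$ which I take to be $(0,\ldots,0,-R)$. Now $A_k^* = (\S_R\setminus\Theta_k)^*$ is a geodesic cap centered at $x^*$ of surface measure $\mu(\S_R)-\mu(\Theta_k)$, so its complement within $\S_R$ is a geodesic cap centered at the antipode of $x^*$, of measure $\mu(\Theta_k)$; equivalently, $\S_R\setminus A_k^*$ is the cap $\C_k$ centered at $(0,\ldots,0,-R)$... wait --- I must be careful: the cap centered at $x^*=(0,\ldots,0,-R)$ is $A_k^*$ itself, and its complement is a cap centered at the \emph{north} pole. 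To match the statement, I instead choose $x^* = (0,\ldots,0,+R)$ as the center of rearrangement for the sets $A_k = \S_R \setminus \Theta_k$; then $A_k^*$ is a cap at the north pole of measure $\mu(\S_R)-\mu(\Theta_k)$, and $\S_R\setminus A_k^* = \C_k$ is precisely the geodesic cap centered at $(0,\ldots,0,-R)$ with $\mu(\C_k)=\mu(\Theta_k)$, as in the lemma. Therefore the rearranged integral equals $\mu(\S_R)\cdot\P(\tau^\C > n)$ by the same identity as above (run in reverse, using that $\tilde\zeta$ with sets $\C_k$ has exactly this integral representation), and the inequality $\P(\tau^\Theta>n)\le \P(\tau^\C>n)$ follows.

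The main obstacle, and the place to be careful, is making sure the integral representation is valid and that Theorem~\ref{thm:rearrang} genuinely applies: one must check that $\psi(\cdot,\cdot)$ really is a nonincreasing function of $\rho$ \emph{on all of} $\S_R\times\S_R$ (the normalizing constant $\mu(\C(x,\epsilon))$ being $x$-independent is what makes this true, so this needs an explicit remark), that constant functions $\psi_{ij}\equiv 1$ are admissible in the theorem (they are trivially nonincreasing), and that the sets $A_k = \S_R\setminus\Theta_k$ are measurable --- or, if $\Theta_k$ is not assumed measurable, that one should work with an approximation or note that the complement of an arbitrary set still has a well-defined rearrangement via outer/inner measure. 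A secondary technical point is the case $\epsilon$ large enough that $\C(x,\epsilon)=\S_R$, where the kernel is just uniform; this is harmless and in fact only makes the chain's dependence weaker. Apart from these bookkeeping issues, the argument is a direct unwinding of the rearrangement inequality.
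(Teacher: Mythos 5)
Your proposal is correct and follows essentially the same route as the paper: write $\P(\tau^\Theta>n)$ as an $(n{+}1)$-fold integral of the indicator functions of the complements $\Theta_k^{\mathrm{c}}$ against the product of consecutive kernels $\psi(x_{k},x_{k+1})$, apply Theorem~\ref{thm:rearrang} with rearrangement centered at the north pole $(0,\ldots,0,R)$, and pass to complements to obtain the caps $\C_k$ at the south pole. The extra points you flag (the $x$-independence of $\mu(\C(x,\epsilon))$, constant $\psi_{ij}$ for non-consecutive indices) are correct and left implicit in the paper.
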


\begin{proof}[\textbf{Proof}]
Using the Markov property, we write $\P(\tau^{\Theta} >n)$ as
\begin{align*}
\frac{1}{\mu(\S)}
\int_{\S_R}\cdots \int_{\S_R} \prod_{i=1}^{n} \psi(x_{i-1},x_i) \prod_{i=0}^{n}\1(x_{i} \in \Theta_i^{\mathrm{c}})\,d\mu(x_0)\cdots \,d\mu(x_n).
\end{align*}
Let $x^* = (0,\ldots,0,R)$. Since the transition kernel $\psi$ is a nonincreasing function of distance, applying Theorem~\ref{thm:rearrang}, we obtain that this last integral is bounded from above by
\[
\int_{\S_R} \ldots \int_{\S_R} \prod_{i=1}^{n} \psi(x_{i-1},x_i) \prod_{i=0}^{n}\1(x_{i} \in A_i)\,d\mu(x_0)\cdots \,d\mu(x_n),
\]
where for each $i$, we have that $A_i$ is a geodesic cap centered at $x^*$ with $\mu(A_i) = \mu(\Theta_i^{\mathrm{c}})$.
Writing $\C_i = A_i^{\mathrm{c}}$, we have that $\C_i$ is a geodesic cap centered at $(0,\ldots,0,-R)$ such that
$\mu(\C_i) = \mu(\Theta_i)$ and hence proving the lemma.
\end{proof}

The rest of this section is devoted to the proof of Proposition~\ref{lem:discrete}.

\begin{proof}[\textbf{Proof of Proposition~\ref{lem:discrete}}]
First note that if the sets $(U_k)$ have infinite volume, then the inequality is trivially true.
We will prove the theorem under the assumption that they are all bounded, since then by truncation we can get the result for any collection of unbounded open sets.

Let $L>0$ be sufficiently large such that
\[
\cup_{i=0}^{n}U_i \subset \B(0,L).
\]
We define the projection mapping $\pi: \S_R \to \R^d$ via
\begin{align}\label{eq:defproj}
\pi(x_1, \ldots, x_{d+1}) = (x_1, \ldots, x_d)
\end{align}
and the inverse map $\pi^{-1}(x_1,\ldots,x_d) = (x_1,\ldots,x_{d+1}) \in \S_R$ such that $x_{d+1}<0$.
Next, we consider the ball $\B(0,L+cn\epsilon)$ in $\R^d$, where $c$ is a positive constant chosen in such a way so that if we also take $R$ big enough, then  we can ensure that
\begin{align}\label{eq:defcap}
\C(L) = \pi^{-1}(\B(0,L+cn\epsilon))
\end{align}
is a cap (centered at $(0,\ldots,0,-R)$) of geodesic radius bigger than $L+n \epsilon$.
\newline
From Lemma~\ref{lem:detsphere} we have that
\begin{align}\label{eq:ineqsphe}
\P(\tau^{\pi^{-1}(U)}>n) \leq \P(\tau^{\C}>n),
\end{align}
where for each $k$ we define $\C_k$ to be a geodesic cap centered at $(0,\ldots,0,-R)$ such that
$\mu(\C_k) = \mu(\pi^{-1}(U_k))$.
\newline
It is clear that
\[
\P(\tau^{\pi^{-1}(U)} \leq n) = \P(\tilde{\zeta}(0) \in \C(L)) \P_{\C(L)}(\tau^{\pi^{-1}(U)} \leq n),
\]
where under $\P_{\C(L)}$ the starting point of the random walk is uniform on $\C(L)$. Similarly for the collection of caps $\C$ we have
\[
\P(\tau^{\C} \leq n) = \P(\tilde{\zeta}(0) \in \C(L)) \P_{\C(L)}(\tau^{\C} \leq n).
\]
Hence, using the above equalities together with \eqref{eq:ineqsphe} we obtain that
\begin{align}\label{eq:finineqsph}
\P(\forall k=0,\ldots,n, \pi(\zeta(k)) \notin U_k) \leq \P(\forall k=0,\ldots,n, \pi(\zeta(k)) \notin \pi(\C_k)),
\end{align}
where $\zeta$ is a random walk started from a uniform point in $\C(L)$ and with transition kernel given by \eqref{eq:sphkern}.
Now, we will go back to $\R^d$.
Let $(z(k))_k$ be a random walk that starts from a uniform point in $\B(0,L+cn\epsilon)$ and has transition kernel given by \eqref{eq:kernel}. We define
\begin{align}\label{eq:deftdet}
\tdet^{U} = \min \{k \geq 0: z(k) \in U_k \}.
\end{align}
Since $z(0)$ is uniform on $\B(0,L+cn\epsilon)$, we can write
\[
\P(\tdet^{U} > n) = 1 - \frac{\E\left[\vol\left(\cup_{k=0}^{n}(z(k)+U_k) \right)\right]}{\vol(\B(0,L+cn\epsilon))}.
\]
It is easy to check that for each $k$ the projection $\pi(\C_k)$ is a ball in $\R^d$ centered at $0$. Let $r_k^R$ be its radius and let
$r_k$ be such that $\vol(\B(0,r_k)) = \vol(U_k)$.
Then by Lemma~\ref{lem:volume} we have that
\begin{align}\label{eq:radiusconv}
r_k^R  \to r_k, \text{ as } R \to \infty.
\end{align}
From Lemma~\ref{lem:coupling}, there exists a coupling of $(\pi(\zeta(k)))_{k=0,\ldots,n}$ with $(z(k))_{k=0,\ldots,n}$, so that
\begin{align*}
\P(\forall k=0,\ldots,n: \pi(\zeta(k)) = z(k)) \to 1 \text{ as } R \to \infty.
\end{align*}
By the union bound we have that
\begin{align}\label{eq:equal1}
\P(\forall k=0,\ldots,n: z(k) \notin U_k) \leq \P(\forall k=0,\ldots,n: \pi(\zeta(k)) \notin U_k) + \P(\text{coupling fails}),
\end{align}
where
\begin{align}\label{eq:failure}
\P(\text{coupling fails}) = 1 - \P(\forall k=0,\ldots,n: \pi(\zeta(k)) = z(k)) \to 0 \text{ as } R \to \infty.
\end{align}
Similarly,
\begin{align}\label{eq:equal2}
\P(\forall k=0,\ldots,n: z(k) \notin \pi(\C_k)) \geq \P(\forall k=0,\ldots,n: \pi(\zeta(k)) \notin \pi(\C_k)) - \P(\text{coupling fails}).
\end{align}
Hence, \eqref{eq:equal1} and \eqref{eq:equal2} together with \eqref{eq:finineqsph} give that
\begin{align*}
\P(\tdet^{\pi(\C)} >n) \geq \P(\forall k, z(k) \notin U_k) - 2\P(\text{coupling fails}).
\end{align*}
Therefore, using that $\pi(\C_k) = \B(0,r_k^R)$, we deduce that
\begin{align*}
1 - \frac{\E\left[\vol\left(\cup_{k=0}^{n}(z(k)+\B(0,r_k^R)) \right)\right]}{\vol(\B(0,L+cn\epsilon))}
\geq 1 -\frac{\E\left[\vol\left(\cup_{k=0}^{n}(z(k)+U_k) \right)\right]}{\vol(\B(0,L+cn\epsilon))} - 2\P(\text{coupling fails}).
\end{align*}
Using \eqref{eq:radiusconv} and \eqref{eq:failure} and letting $R \to \infty$ in the inequality above concludes the proof of the lemma.
\end{proof}

\section{Proof of Theorem~\ref{thm:wienersausage}}\label{sec:actualproof}
For $n \in \N$ and $t >0$ we define
\[
\mathcal{D}_{n,t}  = \left\{\frac{kt}{2^n}: k=0,\ldots,2^n \right\}.
\]

\begin{lemma}\label{lem:browniandiscrete}
Let $(\xi(s))_{s \geq 0}$ be a standard Brownian motion in $d \geq 1$ dimensions and let $(V_s)_{s \geq 0}$ be open sets in $\R^d$. For each $s$, let $r_s>0$ be such that $\vol(V_s) = \vol(\B(0,r_s))$.
Then for all $n \in \N$ and $t>0$ we have that
\[
\E\left[\vol \left(\cup_{\ell \in \mathcal{D}_{n,t}} (\xi(\ell) + V_\ell)\right) \right] \geq
\E\left[ \vol \left(\cup_{\ell \in \mathcal{D}_{n,t}} \B(\xi(\ell), r_\ell) \right) \right].
\]
\end{lemma}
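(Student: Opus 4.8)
The plan is to deduce Lemma~\ref{lem:browniandiscrete} from Proposition~\ref{lem:discrete} by approximating each increment of the discrete skeleton $(\xi(\ell))_{\ell\in\mathcal{D}_{n,t}}$ by a large number of small uniform-ball steps and then letting the number of such steps tend to infinity. First I would dispose of the degenerate case: if some $V_\ell$ has infinite volume then both sides of the claimed inequality are infinite and there is nothing to prove, so from now on assume $\vol(V_\ell)<\infty$ for every $\ell\in\mathcal{D}_{n,t}$ (one could in addition truncate to bounded sets as in the proof of Proposition~\ref{lem:discrete}, but this is not needed here). Throughout write $\delta=t/2^n$.

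Fix an integer $m\geq1$, put $\epsilon_m=\sqrt{(d+2)\delta/m}$, and let $(z_m(k))_{k\geq0}$ be the random walk started at $0$ with transition kernel \eqref{eq:kernel} for $\epsilon=\epsilon_m$. A single step is uniform on $\B(0,\epsilon_m)$, hence has mean zero and covariance $\frac{\epsilon_m^2}{d+2}I_d=\frac{\delta}{m}I_d$, so the displacement of $z_m$ over any $m$ consecutive steps has covariance $\delta I_d$. I would then apply Proposition~\ref{lem:discrete} with the integer $2^n m$ playing the role of the parameter $n$ in that proposition, taking $U_k=V_{j\delta}$ when $k=jm$ for some $j\in\{0,\dots,2^n\}$ and $U_k=\emptyset$ otherwise. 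Since $\vol(\emptyset)=0$, the radii $r_k$ attached to the empty placeholders are $0$ and the corresponding balls are empty, so the unions in \eqref{eq:sausagerw} reduce to unions over the indices $k=jm$, and Proposition~\ref{lem:discrete} gives, for every $m$,
\begin{equation}\label{eq:plan-rw}
\E\left[\vol\left(\cup_{j=0}^{2^n}\left(z_m(jm)+V_{j\delta}\right)\right)\right]\geq\E\left[\vol\left(\cup_{j=0}^{2^n}\B\left(z_m(jm),r_{j\delta}\right)\right)\right].
\end{equation}
(The proposition is stated for a random walk with the given kernel without fixing the law of $z(0)$; in any event the identity $\cup_k(z(k)+U_k)=z(0)+\cup_k\left((z(k)-z(0))+U_k\right)$ shows that the expected volume is unchanged when $z(0)$ is replaced by $0$.)

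It remains to let $m\to\infty$ in \eqref{eq:plan-rw}. The displacements of $z_m$ over the $2^n$ disjoint blocks $\{jm+1,\dots,(j+1)m\}$, $j=0,\dots,2^n-1$, are i.i.d., each individual step being $\sqrt{\delta/m}$ times a uniform point in the fixed ball $\B(0,\sqrt{d+2})$; hence the multivariate central limit theorem --- equivalently, Donsker's invariance principle evaluated at the times of $\mathcal{D}_{n,t}$ --- shows that $(z_m(jm))_{j=0}^{2^n}$ converges in distribution to $(\xi(j\delta))_{j=0}^{2^n}$ as $m\to\infty$. Now consider the functionals $f,g:(\R^d)^{2^n+1}\to[0,\infty)$ given by $f(x)=\vol(\cup_{j=0}^{2^n}(x_j+V_{j\delta}))$ and $g(x)=\vol(\cup_{j=0}^{2^n}\B(x_j,r_{j\delta}))$. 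Both are bounded by $\sum_{j=0}^{2^n}\vol(V_{j\delta})<\infty$, and both are continuous: from the elementary inclusion $\cup_jA_j\,\triangle\,\cup_jB_j\subseteq\cup_j(A_j\triangle B_j)$ and the continuity of translation in $L^1(\R^d)$ (all the indicators involved lie in $L^1$ because the sets have finite volume) one obtains $|f(x)-f(x')|\leq\sum_j\vol\left(((x_j-x_j')+V_{j\delta})\triangle V_{j\delta}\right)\to0$ as $x'\to x$, and likewise for $g$. Since $\{j\delta:j=0,\dots,2^n\}=\mathcal{D}_{n,t}$, applying weak convergence to the bounded continuous functionals $f$ and $g$ makes the two sides of \eqref{eq:plan-rw} converge to the corresponding sides of the inequality in the lemma, which completes the proof.

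The one step that deserves care is the continuity of $f$ and $g$: it would not be enough to argue only via lower semicontinuity of the volume of a union of open sets, since that by itself does not let one pass to the limit on the left-hand side of \eqref{eq:plan-rw}. What makes it work is that for sets of \emph{finite} volume the volume of a union of finitely many translates depends continuously on the translation vectors, via continuity of translation in $L^1(\R^d)$; it is natural to record this --- for all the volume functionals occurring in the paper --- among the convergence lemmas of Section~\ref{sec:convergence}. The remaining ingredients --- the calibration $\epsilon_m=\sqrt{(d+2)\delta/m}$, the empty placeholders used to thin the fine walk down to the dyadic times, and the appeal to Proposition~\ref{lem:discrete} --- are routine.
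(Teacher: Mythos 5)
Your proof is correct and follows the same overall route as the paper: both deduce the lemma from Proposition~\ref{lem:discrete} applied to a random walk with uniform-ball increments (with the sets placed only at the skeleton times and empty placeholders elsewhere), invoke Donsker's invariance principle / the CLT to get weak convergence of the skeleton to $(\xi(\ell))_{\ell\in\mathcal{D}_{n,t}}$, and pass to the limit using continuity of the volume-of-union functional; your rescaling of the step size $\epsilon_m$ is just a cosmetic variant of the paper's normalization $S_{[N\ell]}/\sqrt{\sigma^2N}$. The one genuine difference is in the limit passage. The paper's continuity statement (Lemma~\ref{lem:convol}) requires $\vol(\partial V_\ell)=0$, so the paper must split into cases and, when some boundary has positive volume, approximate each $V_\ell$ from inside by finite unions of dyadic cubes and finish with monotone convergence. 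You instead prove continuity of $x\mapsto\vol(\cup_j(x_j+V_{j\delta}))$ directly from the bound $|f(x)-f(x')|\leq\sum_j\vol\bigl(((x_j-x_j')+V_{j\delta})\triangle V_{j\delta}\bigr)$ and the continuity of translation in $L^1(\R^d)$, which is valid for arbitrary finite-volume measurable sets. This eliminates the case distinction and the dyadic-cube approximation entirely, and is arguably the cleaner argument; its only cost is replacing the paper's elementary $\delta$-enlargement computation by the (standard) $L^1$-continuity of translation. Your observation that boundedness of the $V_\ell$ is not needed for this step is also correct, since both the continuity and the uniform bound $\sum_j\vol(V_{j\delta})$ use only finiteness of the volumes.
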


\begin{proof}[\textbf{Proof}]
First note that if the sets $(V_s)$ have infinite volume, then the inequality is trivially true.
We will prove the theorem under the assumption that they are all bounded, since then by truncation we can get the result for any collection of unbounded open sets.

Let $X_1,X_2,\ldots$ be i.i.d.\ random variables with uniform distribution on $\B(0,1)$. Write $S_n = \sum_{i=1}^{n}X_i$. Let $N \in \N$ and let $\sigma^2$ be the variance of $X_1$. Then for any collection of bounded open sets $(U_s)$ we have from Proposition~\ref{lem:discrete} that
\begin{align}\label{eq:expect}
\E\left[\vol\left(\cup_{s \in \{0,1/N,\ldots,[t]/N \}}\left(\frac{S_{[Ns]}}{\sqrt{\sigma^2 N}} +U_s \right) \right)\right]
\geq \E\left[\vol\left(\cup_{s \in \{0,1/N,\ldots,[t]/N \}}\left(\frac{S_{[Ns]}}{\sqrt{\sigma^2 N}} + \B(0,r_s) \right) \right)\right],
\end{align}
where $r_s$ is such that $\vol(U_s) = \vol(\B(0,r_s))$.
\newline
We will drop the dependence on $t$ from $\mathcal{D}_{n,t}$ to simplify the notation.
Since \eqref{eq:expect} holds true for any collection of  bounded sets, we have that for $N$ large enough
\begin{align*}
\E\left[\vol\left(\cup_{\ell \in \mathcal{D}_n}\left(\frac{S_{[N\ell]}}{\sqrt{\sigma^2 N}} +V_\ell \right) \right)\right]
\geq \E\left[\vol\left(\cup_{\ell \in \mathcal{D}_n}\left(\frac{S_{[N\ell]}}{\sqrt{\sigma^2 N}} + \B(0,r_\ell) \right) \right)\right],
\end{align*}
since in \eqref{eq:expect} we can take $U_\ell=V_\ell$, for $\ell \in \mathcal{D}_n$ and empty otherwise. As before, $r_\ell$ is such that $\vol(\B(0,r_\ell)) = \vol(V_\ell)$.
\newline
By Donsker's invariance principle (see for instance \cite[Theorem~5.22]{BM}), for a fixed $n$, we have that
\begin{align}\label{eq:donsker}
\left(\frac{S_{[N\ell]}}{\sqrt{\sigma^2 N}}\right)_{\ell \in \mathcal{D}_n} \xrightarrow{\text{(\emph{w})}} (\xi(\ell))_{\ell \in \mathcal{D}_n}
\text{ as } N \to \infty.
\end{align}
If for all $\ell \in \mathcal{D}_n$ we have that $\vol(\partial V_\ell) = 0$, then by \eqref{eq:donsker} and Lemma~\ref{lem:convol} we deduce that
\[
\E\left[\vol\left(\cup_{\ell \in \mathcal{D}_n}\left(\xi(\ell) +V_\ell \right) \right)\right]
\geq \E\left[\vol\left(\cup_{\ell \in \mathcal{D}_n}\left(\xi(\ell) + \B(0,r_\ell) \right) \right)\right].
\]
If $\exists k$ such that $\vol(\partial V_k) >0$, then for each $\ell \in \mathcal{D}_n$ we write $V_\ell = \cup_{j=1}^{\infty}A_{j,\ell}$, where $(A_{j,\ell})_j$ are all the dyadic cubes that are contained in $V_\ell$.
\newline
Then for every finite $K$, we have that
\begin{align}\label{eq:cube}
\E\left[\vol\left(\cup_{\ell \in \mathcal{D}_n}\left(\xi(\ell) +\cup_{j=1}^{K}A_{j,\ell} \right) \right)\right]
\geq \E\left[\vol\left(\cup_{\ell \in \mathcal{D}_n}\left(\xi(\ell) + \B(0,r(\ell,K)) \right) \right)\right],
\end{align}
where for each $\ell$ we have that $r(\ell,K)$ satisfies $\vol(\B(0,r(\ell,K))) = \vol(\cup_{j=1}^{K}A_{j,\ell})$, and hence $r(\ell,K) \nearrow r_\ell$ as $K \to \infty$. Thus, letting $K \to \infty$, by monotone convergence we conclude that
\[
\E\left[\vol\left(\cup_{\ell \in \mathcal{D}_n}\left(\xi(\ell) + V_\ell \right) \right)\right]
\geq \E\left[\vol\left(\cup_{\ell \in \mathcal{D}_n}\left(\xi(\ell) + \B(0,r_\ell) \right) \right)\right].
\]
\end{proof}

\begin{proof}[\textbf{Proof of Theorem~\ref{thm:wienersausage}}]

Let $M>0$. We will show the theorem under the assumption that $D_s \subset \B(0,M)$, for all $s \leq t$, since then by monotone convergence we can get it for any collection of open sets.
\newline
For each $s$, we define the set
\begin{align*}
D_{s,n} = \{z \in D_s: d(z,D_s^{\mathrm{c}}) > \left(t/2^n\right)^{1/3}\},
\end{align*}
which is open, since the sets $(D_s)$ were assumed to be open.
For every $\ell \in \mathcal{D}_n$, we define
\begin{align}\label{eq:defz}
Z_{\ell} = \cup_{\ell \leq s < \ell + t/2^n} D_{s,n},
\end{align}
which is again open as a union of open sets.
For each $s$ we let $r_{s,n}$ be such that $\vol(\B(0,r_{s,n})) = \vol(D_{s,n})$.
From Proposition~\ref{lem:discrete} we then get that
\begin{align}\label{eq:finalineq}
\E\left[\vol\left(\cup_{\ell \in \mathcal{D}_n}\left(\xi(\ell) +Z_{\ell} \right) \right)\right]
\geq \E\left[\vol\left(\cup_{\ell \in \mathcal{D}_n}\left(\xi(\ell) + \B(0,r_{\ell,n}^*) \right) \right)\right],
\end{align}
where $r_{\ell,n}^*$ satisfies
\[
r_{\ell,n}^* = \left( \sup_{\ell \leq s < \ell + t/2^n} r_{s,n}  - (t/2^n)^{1/3} \right)^+.
\]
We now define the event
\[
\Omega_n = \left\{ \forall h \leq t/2^n: \sup_{s,u: |s - u|\leq h} \|\xi(s) - \xi(u) \| \leq (2^{1/3} - 1)h^{1/3}\right \}.
\]
We will now show that on $\Omega_n$ we have that
\[
\cup_{\ell \in \mathcal{D}_n}(\xi(\ell) + Z_\ell) \subset \cup_{s \leq t} (\xi(s) + D_s).
\]
Let $y \in \xi(\ell)+ Z_\ell$, for some $\ell \in \mathcal{D}_n$. Then
there exists $\ell \leq s < \ell + t/2^n$ such that $d(y-\xi(\ell),D_s^{\mathrm{c}}) > (t/2^n)^{1/3}$. Also, since $\| \xi(s) - \xi(\ell)\| \leq (t/2^n)^{1/3}$ on $\Omega_n$, we get by the triangle inequality that
\[
d(y-\xi(s),D_s^{\mathrm{c}}) \geq d(y-\xi(\ell),D_s^{\mathrm{c}}) - \|\xi(s) - \xi(\ell)\| > (t/2^n)^{1/3} - (2^{1/3} - 1)(t/2^n)^{1/3} >0,
\]
and hence we deduce that $y \in \xi(s) + D_s$.
\newline
Therefore, from \eqref{eq:finalineq} we obtain
\begin{align}\label{eq:beforefinal}
\E\left[\vol \left( \cup_{s \leq t} (\xi(s) + D_s) \right)\right] + \E\left[\vol \left( \cup_{\ell \in \mathcal{D}_n} (\xi(\ell) + Z_\ell) \right) \1(\Omega_n^{\mathrm{c}})\right] \geq
\E\left[\vol \left( \cup_{\ell \in \mathcal{D}_n} \B(\xi(\ell),r_{\ell,n}^*) \right)\1(\Omega_n)\right].
\end{align}
For all $n$ we have that a.s.\
\[
\vol \left( \cup_{\ell \in \mathcal{D}_n} \B(\xi(\ell),r_{\ell,n}^*) \right)\1(\Omega_n) \nearrow \vol\left(\cup_{s \leq t} \B(\xi(s),r_s) \right).
\]
Indeed it is clear that $\Omega_n \subset \Omega_{n+1}$ and on $\Omega_n$ we have that
\[
\cup_{\ell \in \mathcal{D}_n}\B(\xi(\ell),r_{\ell,n}^*) \subset \cup_{\ell \in \mathcal{D}_{n+1}} \B(\xi(\ell),r_{\ell,n+1}^*).
\]
To prove that, let $y$ be such that $\|y- \xi(\ell) \| < r_{\ell,n}^*$. We set $\ell' = \ell + t/2^{n+1}$ and $\ell''=\ell + t/2^n$. Then there are two cases:
\begin{itemize}
\item If $\sup_{\ell \leq s < \ell'} r_{s,n} \geq \sup_{\ell' \leq s < \ell''}r_{s,n}$, then
$\| y - \xi(\ell)\| < r_{\ell,n+1}^*$.
\item If $\sup_{\ell \leq s < \ell'} r_{s,n} < \sup_{\ell' \leq s < \ell''}r_{s,n}$, then
\begin{align*}
\| y - \xi(\ell') \| \leq \| y - \xi(\ell) \| + \| \xi(\ell) - \xi(\ell') \| < r_{\ell,n}^* + (2^{1/3} - 1) (t/2^{n+1})^{1/3} \\
= \sup_{\ell' \leq s < \ell+t/2^n} r_{s,n} - (t/2^n)^{1/3} + (t/2^{n+1})^{1/3}(2^{1/3} - 1) \leq r_{\ell',n+1}^*.
\end{align*}
\end{itemize}
It is easy to show that on $\cup_n \Omega_n$ we have $\cup_{n} \cup_{\ell \in \mathcal{D}_n}\B(\xi(\ell),r_{\ell,n}^*) = \cup_{s \leq t} \B(\xi(s),r_s)$.
\newline
Finally by the dominated convergence theorem we get that
\[
\E\left[\vol \left( \cup_{\ell \in \mathcal{D}_n} (\xi(\ell) + Z_\ell) \right) \1(\Omega_n^{\mathrm{c}})\right] \to 0,
\]
since $\P(\Omega_n^{\mathrm{c}}) \to 0$ as $n \to \infty$, by L\'evy's modulus of continuity theorem (see for instance \cite[Theorem~1.14]{BM}) and also
\[
\vol\left(\cup_{\ell \in \mathcal{D}_n}\left(\xi_\ell + Z_\ell \right)\right) \leq \vol(\B(0,\max_{s \leq t}|\xi(s)|+M)),
\]
since we assumed that the sets $(D_s)$ are all contained in $\B(0,M)$.
\newline
Taking the limit as $n \to \infty$ in \eqref{eq:beforefinal} concludes the proof.
\end{proof}

\begin{remark}\label{rem:measure}
\rm{
The proofs of Lemma~\ref{lem:browniandiscrete} and Theorem~\ref{thm:wienersausage} also give that for any collection of open sets $(D_s)$, the volume of $\cup_{s \leq t}(\xi(s)+D_s)$ is a random variable. Indeed, on $\cup_{n}\Omega_n$ (which has $\P(\cup_n \Omega_n) = 1$) we have that
\[
\vol(\cup_{s \leq t}(\xi(s) + D_s)) = \lim_{n \to \infty}\vol\left(\cup_{k=1}^{n}\cup_{\ell \in \mathcal{D}_k} (\xi(\ell) + Z_\ell)\right),
\]
where $Z_\ell$ is as defined in \eqref{eq:defz}. If for all $n$ and all $\ell \in \mathcal{D}_n$
we have that $\vol(\partial Z_\ell) = 0$, then from Lemma~\ref{lem:convol} we get the measurability. Otherwise, we write $Z_\ell$ as a countable union of the dyadic subcubes contained in it and then use the monotonicity property together with Lemma~\ref{lem:convol} again, like in the last part of the proof of Lemma~\ref{lem:browniandiscrete}.
}
\end{remark}

\section{Relating the sphere to its tangent plane}\label{sec:convergence}

It is intuitive and standard that as $R \to \infty$ the sphere $\S_R$ tends to Euclidean space. This section, that can be skipped, makes this precise in the situation we need and also establishes a useful lemma on continuity of volumes.

As in Section~\ref{sec:proof}, $\C(x,r)$ stands for the geodesic cap centered at $x$ of radius $r$. Also, the projection mapping $\pi$ is as defined in \eqref{eq:defproj} and $\C(K)=\pi^{-1}(\B(0,K))$, for $K>0$, is a cap centered at $(0,\ldots,0,-R)$, where $\B(0,K)$ is a ball in $\R^d$ centered at $0$ of radius $K$.
\begin{lemma}\label{lem:volume}
Let $r>0$. Then for all $\delta>0$, there exists $R_0$ such that for all $x \in \B(0,K)$ we have that
\begin{align}\label{eq:proj}
\B(x,r-\delta) \subset \pi(\C(\pi^{-1}(x), r)) \subset \B(x,r+\delta), \text{ for all } R>R_0.
\end{align}
Also for all $A \subset \pi^{-1}(\B(0,K))$ we have that
\[
\mu(A) - \vol(\pi(A)) \to 0, \text{ as } R \to \infty.
\]
\end{lemma}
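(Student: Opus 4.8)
The plan is to reduce both assertions to two elementary facts about the sphere $\S_R$ near its south pole $p_R := (0,\ldots,0,-R)$: first, that the piece of the open lower hemisphere lying over a fixed ball is a graph on which $\pi$ is a bijection whose area distortion tends to $1$ uniformly as $R \to \infty$ (this gives the comparison of $\mu$ and $\vol$ immediately); and second, that on this same region the geodesic distance $\rho$ and the Euclidean distance of the projections differ by at most $\delta$ once $R$ is large (this gives \eqref{eq:proj}). Throughout I set $K' = K + r + 1$.

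For the area distortion: the open lower hemisphere is the graph of $g(x) = -\sqrt{R^2-|x|^2}$ over $\B(0,R)$, with inverse $\pi^{-1}(x) = (x,g(x))$, and $\nabla g(x) = x/\sqrt{R^2-|x|^2}$, so the surface-area element equals $\sqrt{1+|\nabla g(x)|^2}\,dx = (1-|x|^2/R^2)^{-1/2}\,dx$. For $|x|\le K$ this factor lies between $1$ and $(1-K^2/R^2)^{-1/2} = 1+\eta_R$ with $\eta_R \to 0$. Hence for any measurable $A \subset \pi^{-1}(\B(0,K))$ one has $\mu(A) = \int_{\pi(A)}(1-|x|^2/R^2)^{-1/2}\,dx$, so that
\[
\vol(\pi(A)) \;\le\; \mu(A) \;\le\; (1+\eta_R)\vol(\pi(A)) \;\le\; (1+\eta_R)\,\omega(d)K^d ,
\]
and therefore $0 \le \mu(A)-\vol(\pi(A)) \le \eta_R\,\omega(d)K^d \to 0$, which is the second claim.

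For \eqref{eq:proj} I would first establish the uniform metric comparison: there is $R_0 = R_0(r,\delta,K)$ such that for all $R \ge R_0$ and all $u,v \in \B(0,K')$,
\[
|u-v| \;\le\; \rho(\pi^{-1}(u),\pi^{-1}(v)) \;\le\; |u-v| + \delta .
\]
The left inequality holds for every $R$, since $\pi$ is $1$-Lipschitz and a chord of $\S_R$ is no longer than the subtended arc. For the right inequality, writing $y = \pi^{-1}(u)$, $y' = \pi^{-1}(v)$, one has $|y-y'|^2 = |u-v|^2 + (g(u)-g(v))^2$ with $|g(u)-g(v)| \le R - \sqrt{R^2-K'^2} \le K'^2/R$, hence $|y-y'| \le |u-v| + K'^2/R$; since the chord $|y-y'|$ is bounded (by $2K'+1$ for $R$ large) and $\rho(y,y') = 2R\arcsin(|y-y'|/(2R))$, the elementary estimate $\arcsin(s) = s + O(s^3)$ gives $\rho(y,y') \le |y-y'| + C(K')^3/R^2$, and combining the two bounds yields $\rho(y,y') \le |u-v|+\delta$ once $R \ge R_0$.

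Granting the comparison, \eqref{eq:proj} follows. For $x \in \B(0,K)$ and $z \in \C(\pi^{-1}(x),r)$ one has $\rho(z,p_R) \le r + \rho(\pi^{-1}(x),p_R) = r + R\arcsin(|x|/R) < r+K+1 = K'$ for $R$ large, and since $K' < \pi R/2$ such a $z$ lies in the open lower hemisphere, so $z = \pi^{-1}(x')$ with $|x'| \le \rho(z,p_R) < K'$ and the comparison applies. Then: if $|x-x'| < r-\delta$ the comparison gives $\rho(\pi^{-1}(x),\pi^{-1}(x')) \le |x-x'|+\delta < r$, so $x' = \pi(\pi^{-1}(x')) \in \pi(\C(\pi^{-1}(x),r))$; conversely if $x' \in \pi(\C(\pi^{-1}(x),r))$ then $|x-x'| \le \rho(\pi^{-1}(x),\pi^{-1}(x')) < r < r+\delta$. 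This is exactly \eqref{eq:proj}. The calculations are all routine; the only point demanding care is the uniformity — ensuring a single $R_0$ works for every $x \in \B(0,K)$, and that the whole geodesic cap $\C(\pi^{-1}(x),r)$ stays inside the chart $\pi^{-1}(\B(0,K'))$ on which $\pi$ is a bijection with area distortion close to $1$.
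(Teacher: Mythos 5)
Your proof is correct and follows essentially the same route as the paper: both arguments rest on the uniform convergence, as $R\to\infty$, of the spherical area element (your Jacobian $(1-|x|^2/R^2)^{-1/2}$ is exactly the paper's $1/\cos\alpha(x)$) and of the geodesic distance to the Euclidean distance of the projections over a fixed chart near the south pole. Your version is somewhat tidier in that the two-sided metric comparison yields both inclusions in \eqref{eq:proj} symmetrically, whereas the paper proves one inclusion by an explicit chord--angle computation and disposes of the other by a complementation argument left to the reader.
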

\begin{proof}[\textbf{Proof}]
Let $c$ be a sufficiently large positive constant so that for $R$ sufficiently large we have for all $x \in \B(0,K)$ that
\[
\pi(\C(\pi^{-1}(x),r)), \B(x,r+\delta) \subset \B(0,K+c).
\]
Let $x= (x_1,\ldots,x_d) \in \B(0,K)$ and $y=(y_1,\ldots,y_d) \in \pi(\C(\pi^{-1}(x), r))$.
\newline Then $\pi^{-1}(y) \in \C(\pi^{-1}(x),r)$ and hence
\begin{align}\label{eq:deftheta}
\theta(\pi^{-1}(x),\pi^{-1}(y)) \leq \frac{r}{R},
\end{align}
where $\theta=\theta(\pi^{-1}(x),\pi^{-1}(y))$ satisfies
\[
\cos \theta = \frac{\langle \pi^{-1}(x),\pi^{-1}(y) \rangle}{R^2} = \frac{\sum_{i=1}^{d+1}x_i y_i}{R^2}.
\]
We write $\|\cdot\|$ for the Euclidean distance in $\R^d$. We then have
\begin{align*}
\|x-y\|^2 = \sum_{i=1}^{d}(x_i - y_i)^2 = 2R^2 - 2 R^2 \cos \theta - (x_{d+1} - y_{d+1})^2.
\end{align*}
But, since $\pi^{-1}(x), \pi^{-1}(y) \in \S_R$, we get that $x_{d+1} = \sqrt{R^2 - \sum_{i=1}^{d}x_i^2}$ and
$y_{d+1} = \sqrt{R^2 - \sum_{i=1}^{d}y_i^2}$ and using the fact that
$x \in \B(0,K)$ and $y\in \B(0,K+c)$ we obtain that
\[
(x_{d+1} - y_{d+1}) \to 0 \text{ as } R \to \infty,
\]
uniformly over all $x$ and $y$ in $\B(0,K)$ and $\B(0,K+c)$ respectively. From \eqref{eq:deftheta} and the monotonicity of the $\cos \phi$ function for $\phi$ small, we obtain that
\[
\cos \theta \geq \cos \frac{r}{R},
\]
and hence for all $R$ sufficiently large we have that
\[
\|x-y\| \leq r + \delta.
\]
To prove the other inclusion, it suffices to show that
\begin{align}\label{eq:inclusion}
\B(0,K+c) \cap \pi(\C(\pi^{-1}(x),r))^{\mathrm{c}} \subset \B(0,K+c) \cap \B(x,r-\delta)^{\mathrm{c}},
\end{align}
since $\B(0,K+c)^{\mathrm{c}}$ is disjoint from $\pi(\C(\pi^{-1}(x),r))$ and $\B(x,r-\delta)$. Finally
\eqref{eq:inclusion} can be proved using similar arguments to the ones employed above.

To prove the last statement of the lemma, we write
\[
\mu(A) - \vol(\pi(A)) = \int_{\pi(A)} \left(\frac{1}{\cos(\alpha(x))} - 1\right)\,dx,
\]
where $\alpha(x)$ is the angle at the origin between $\pi^{-1}(x)$ and $(0,\ldots,0,-R)$.
\newline
Using that $\alpha(x) \to 0$ as $R \to \infty$ uniformly over all $x \in \B(0,K)$ we get the desired convergence.
\end{proof}

\begin{lemma}\label{lem:coupling}
Let $z$ be a random walk in $\R^d$ started from a uniform point in $\B(0,L+cn\epsilon)$ and with transition kernel given by \eqref{eq:kernel}. Let $\zeta$ be a random walk on the sphere $\S_R$ started from a uniform point in $\C(L) = \pi^{-1}(\B(0,L+cn\epsilon))$ and with transition kernel given by \eqref{eq:sphkern}. Then there exists a coupling of $z$ and $\pi(\zeta)$ such that for all $n$ we have that
\[
\P(\forall k=0, \ldots, n, \pi(\zeta(k)) = z(k)) \to 1 \text{ as } R \to \infty.
\]
\end{lemma}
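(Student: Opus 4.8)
The plan is to couple the two walks step by step, so that at each step the conditional law of the next increment is the same under the coupling whenever the current positions agree, and then show that the total probability of the ``bad'' events (where the coupling is forced to break) is small as $R\to\infty$. First I would couple the initial positions: $z(0)$ is uniform on $\B(0,L+cn\epsilon)$ and $\zeta(0)$ is uniform on $\C(L)=\pi^{-1}(\B(0,L+cn\epsilon))$, and since $\pi$ pushes the uniform measure on $\C(L)$ to a measure on $\B(0,L+cn\epsilon)$ that is absolutely continuous with density $1/\cos(\alpha(x))$ (in the notation of Lemma~\ref{lem:volume}), and $\alpha(x)\to 0$ uniformly on $\B(0,K)$ as $R\to\infty$, this density tends to $1$ uniformly. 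Hence the total variation distance between $\pi_*(\mathrm{Unif}(\C(L)))$ and $\mathrm{Unif}(\B(0,L+cn\epsilon))$ tends to $0$, and we can couple so that $\P(\pi(\zeta(0))=z(0))\to 1$.

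Next, for the dynamics, suppose at step $k$ we have $\pi(\zeta(k))=z(k)=:x$ with $x\in\B(0,L+k c\epsilon)$ (so that all relevant points stay inside a fixed ball $\B(0,L+cn\epsilon)$, which is why the constant $c$ was built into the radius). The increment of $z$ is uniform on $\B(x,\epsilon)$; the increment of $\zeta$ sends $\zeta(k)$ to a uniform point of the geodesic cap $\C(\zeta(k),\epsilon)$, and its projection is uniform (with respect to $\mu$, hence with the same $1/\cos\alpha$ density distortion) on $\pi(\C(\pi^{-1}(x),\epsilon))$. By Lemma~\ref{lem:volume}, for any $\delta>0$ and $R$ large we have $\B(x,\epsilon-\delta)\subset\pi(\C(\pi^{-1}(x),\epsilon))\subset\B(x,\epsilon+\delta)$, and by the last statement of Lemma~\ref{lem:volume} the surface measure and the projected Lebesgue measure differ by $o(1)$; combining these, the total variation distance between the law of $z(k+1)-z(k)$ and the law of $\pi(\zeta(k+1))-\pi(\zeta(k))$ (given the common position $x$) is at most some $\eta(R)\to 0$, uniformly over $x\in\B(0,L+cn\epsilon)$. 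Therefore we may extend the coupling so that, conditionally on agreement at step $k$, the walks agree at step $k+1$ with probability at least $1-\eta(R)$.

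Finally I would assemble this by a union bound: letting $F$ be the event that the coupling ever fails by step $n$,
\[
\P(F)\le \P(\pi(\zeta(0))\ne z(0))+\sum_{k=0}^{n-1}\P\big(\text{fail at step }k{+}1 \mid \text{agree through step }k\big)\le \P(\pi(\zeta(0))\ne z(0))+n\,\eta(R),
\]
which tends to $0$ as $R\to\infty$ for fixed $n$; hence $\P(\forall k\le n:\pi(\zeta(k))=z(k))\to 1$. The main obstacle, and the only place real care is needed, is the uniform control of the one-step transition comparison: one must check that the distortion between $\mu$-uniform measure on a small geodesic cap and Lebesgue-uniform measure on its projection — together with the fact that this projection is sandwiched between two Euclidean balls of radii $\epsilon\pm\delta$ — yields a total-variation bound $\eta(R)$ that is \emph{uniform in the base point} $x$ over the compact ball $\B(0,L+cn\epsilon)$; this uniformity is exactly what Lemma~\ref{lem:volume} was set up to provide, since all the error terms there ($x_{d+1}-y_{d+1}\to0$, $\alpha(x)\to0$, $\cos\theta\ge\cos(r/R)$) are uniform over the relevant compact region. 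Everything else is a routine telescoping of the coupling.
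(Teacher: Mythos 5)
Your proposal is correct and follows essentially the same route as the paper: maximal coupling of the initial positions via the uniform convergence of the projected density $1/\cos(\alpha(x))$ to $1$, then iterated maximal coupling of the one-step transitions using Lemma~\ref{lem:volume}, assembled by a union bound over the $n$ steps. Your write-up is in fact somewhat more explicit than the paper's about the uniformity in the base point of the one-step total-variation estimate, but the argument is the same.
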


\begin{proof}[\textbf{Proof}]
Since $\zeta(0)$ is uniformly distributed on $\C(L)$, we have that
$\pi(\zeta(0))$ has density function given by
\[
f_R^0(x) = \frac{\1(x \in \B(0,L+cn\epsilon))}{\cos(\alpha(x)) \mu(\C(L))},
\]
where $\alpha(x)$ is the angle between $\pi^{-1}(x)$ and $(0,\ldots,0,-R)$.
Also $z(0)$ has density function given by
\[
f^0(x) = \frac{\1(x \in \B(0,L+cn\epsilon))}{\vol(\B(0,L+cn\epsilon))}.
\]
From Lemma~\ref{lem:volume} we get that $f_R(x) \to f(x)$ as $R \to \infty$ uniformly over all $x$, and hence the maximal coupling will succeed with probability tending to $1$ as $R \to \infty$.
So, for $\pi(\zeta(0))$ and $z(0)$ we have used the maximal coupling. Given $\pi(\zeta(0))$ and $z(0)$, then
the density function of $\pi(\zeta(1))$ is given by
\[
f_R^1(x) = \frac{\1(x \in \pi(\C(\zeta(0),\epsilon)))}{\cos(\alpha(x)) \mu(\C(\zeta(0),\epsilon))}
\]
and $\xi(1)$ has density function given by
\[
f^1(x) = \frac{\1(x \in \B(\xi(0),\epsilon))}{\vol(\B(\xi(0),\epsilon))}.
\]
Thus if the coupling of the starting points has succeeded, then we can use the maximal coupling to couple the first steps and continuing this way we can couple the first $n$ steps. Hence, we get that the probability that the coupling succeeds tends to $1$ as $R \to \infty$.
\end{proof}

\begin{lemma}\label{lem:convol}
Let $(A_i)_{i=1,\ldots,n}$ be measurable sets in $\R^d$ such that $\vol(\partial A_i) = 0$, for all $i$. Then the function defined by
\[
x=(x_1,\ldots,x_n) \mapsto \vol(\cup_{i=1}^{n} (x_i + A_i))
\]
is continuous.
\end{lemma}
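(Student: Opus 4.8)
The plan is to prove the slightly stronger statement that $x=(x_1,\dots,x_n)\mapsto \1_{\cup_{i=1}^n(x_i+A_i)}$ is continuous as a map from $(\R^d)^n$ into $L^1(\R^d)$; since $|\vol(E)-\vol(F)|\le \vol(E\triangle F)$ whenever both sets have finite measure, the asserted continuity of $x\mapsto \vol(\cup_i(x_i+A_i))$ is an immediate consequence. First I would dispose of the degenerate case: if some $A_i$ has $\vol(A_i)=\infty$, then $\vol(x_i+A_i)=\infty$ for every $x_i$, so $\vol(\cup_{i=1}^n(x_i+A_i))\equiv+\infty$ is constant and there is nothing to prove. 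Hence from now on I assume $\vol(A_i)<\infty$, equivalently $\1_{A_i}\in L^1(\R^d)$, for all $i$.

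The proof then rests on two ingredients. The first is continuity of translation in $L^1$: for $f\in L^1(\R^d)$ one has $\|f(\cdot-h)-f\|_{L^1}\to 0$ as $h\to 0$, proved in the standard way by approximating $f$ in $L^1$ by a continuous compactly supported function and using its uniform continuity. Applied to $f=\1_{A_i}$ and combined with translation invariance of Lebesgue measure, this gives $\vol\big((x_i+A_i)\triangle(y_i+A_i)\big)=\|\1_{A_i}-\1_{A_i}(\cdot-(y_i-x_i))\|_{L^1}\to 0$ as $y_i\to x_i$. The second ingredient is an elementary algebraic bound: writing $u_i=\1_{x_i+A_i}$ and $v_i=\1_{y_i+A_i}$, we have $\1_{\cup_i(x_i+A_i)}=1-\prod_i(1-u_i)$ and likewise for $v$, so with $a_i=1-v_i$ and $b_i=1-u_i$, both valued in $\{0,1\}\subseteq[0,1]$, the telescoping identity $\prod_i a_i-\prod_i b_i=\sum_{k}\big(\prod_{i<k}a_i\big)(a_k-b_k)\big(\prod_{i>k}b_i\big)$ yields the pointwise bound $\big|\1_{\cup_i(x_i+A_i)}-\1_{\cup_i(y_i+A_i)}\big|\le\sum_{k=1}^n|u_k-v_k|=\sum_{k=1}^n\1_{(x_k+A_k)\triangle(y_k+A_k)}$.

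Integrating this pointwise bound over $\R^d$ gives
\[
\vol\Big(\Big(\bigcup_i(x_i+A_i)\Big)\triangle\Big(\bigcup_i(y_i+A_i)\Big)\Big)\le\sum_{k=1}^n\vol\big((x_k+A_k)\triangle(y_k+A_k)\big),
\]
and by the first ingredient the right-hand side tends to $0$ as $y\to x$. Since every union here has volume at most $\sum_i\vol(A_i)<\infty$, the inequality $|\vol(E)-\vol(F)|\le\vol(E\triangle F)$ applies and gives $|\vol(\cup_i(x_i+A_i))-\vol(\cup_i(y_i+A_i))|\to 0$ as $y\to x$, which is the claim. I do not anticipate a genuine obstacle; the only non-algebraic input is $L^1$-continuity of translation, and the one point needing care is the reduction to finite-volume sets. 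I will remark that this argument in fact never uses the hypothesis $\vol(\partial A_i)=0$; if one prefers an argument invoking it, one can instead observe that the exceptional set $\cup_i(x_i+\partial A_i)$ has measure $0$, that off this set each $z$ lies in $\mathrm{int}(x_i+A_i)$ for some $i$ or outside $\overline{x_i+A_i}$ for all $i$, and hence $\1_{\cup_i(y_i+A_i)}(z)\to\1_{\cup_i(x_i+A_i)}(z)$ as $y\to x$, after which dominated convergence (for bounded $A_i$) or Scheff\'e's lemma concludes.
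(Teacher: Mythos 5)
Your proof is correct, and it reaches the same key intermediate bound as the paper — namely that the symmetric difference of the two unions is at most $\sum_{k=1}^n\vol\big((x_k+A_k)\triangle(y_k+A_k)\big)$ — though the paper obtains it from the set inclusion $(\cup_i E_i)\triangle(\cup_i F_i)\subset\cup_i(E_i\triangle F_i)$ rather than your telescoping identity for products of indicators; that difference is cosmetic. Where you genuinely diverge is in how each individual term is controlled. The paper takes $y=0$ (continuity at a general point follows by relabeling the sets), writes $\vol((x_i+A_i)\triangle A_i)=2\big(\vol((x_i+A_i)\cup A_i)-\vol(A_i)\big)\le 2\big(\vol(A_i^{\delta})-\vol(A_i)\big)$ for $\|x\|<\delta$, and uses the hypothesis $\vol(\partial A_i)=0$ to conclude that $\vol(A_i^{\delta})\to\vol(A_i)$ as $\delta\to 0$. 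You instead invoke $L^1$-continuity of translation, which requires no boundary assumption, and your observation that the hypothesis $\vol(\partial A_i)=0$ is never actually needed is correct. Your route is in fact slightly more robust: the paper's step $\vol(A^{\delta})\to\vol(A)$ needs $\vol(A^{\delta_0})<\infty$ for some $\delta_0>0$, which holds for the bounded sets arising in the paper's applications but can fail for a general finite-volume set with null boundary, whereas $L^1$-continuity of translation holds for every finite-measure set. The trade-off is that the paper's argument is short and self-contained, while yours imports the (standard) density of $C_c(\R^d)$ in $L^1$; your explicit treatment of the infinite-volume case and of the reduction $|\vol(E)-\vol(F)|\le\vol(E\triangle F)$ is a welcome extra degree of care. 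Both arguments are complete.
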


\begin{proof}[\textbf{Proof}]
Let $A^\delta$ stand for the $\delta$-enlargement of the set $A$, i.e. $A^\delta = A + \B(0,\delta)$. If $\vol(\partial A) = 0$, then it is easy to see that
\begin{align}\label{eq:delta}
\vol(A^{\delta}) \to \vol(A) \quad \text{ as } \delta \to 0.
\end{align}
Let $x=(x_1,\ldots,x_n)$ be such that $\|x\|< \delta$. We then have that
\[
\vol(\left(\cup_{i=1}^{n} (x_i + A_i)\right)\triangle \left(\cup_{i=1}^{n}A_i\right))
\leq \vol(\cup_{i=1}^{n}\left((x_i + A_i)\triangle A_i \right))
\leq \sum_{i=1}^{n} \vol((x_i+A_i)\triangle A_i).
\]
But $\vol((x_i+A_i)\triangle A_i) = 2 (\vol((x_i+A_i)\cup A_i) - \vol(A_i))$ and $\vol((x_i + A_i)\cup A_i) \leq \vol(A_i^{\delta})$, and hence by \eqref{eq:delta} we get the desired convergence.
\end{proof}

\section{Concluding Remarks and Questions}\label{sec:conclusion}
\begin{enumerate}
\item
We recall here the detection problem as discussed in \cite{PSSS10}.
Let $\Pi=\{X_i\}$ be a Poisson point process in $\R^d$ of intensity $\lambda$. We now let each point $X_i$ of the Poisson process move according to an independent standard Brownian motion $(\xi_i(s))_{s \geq 0}$. Let $u$ be another particle originally placed at the origin and which is moving according to a deterministic function $f$. We define the detection time of $u$ analogously to \eqref{eq:deftdet} via
\[
T_{\mathrm{det}}^f = \inf\{s \geq 0: \exists i \text{ s.t. } X_i + \xi_i(s) \in \B(f(s),r)\}.
\]
Then from \cite[Lemma~3.1]{PSSS10} we have that
\[
\P(T_{\mathrm{det}}^f> t) = \exp \left(-\lambda \E[\vol\left(\cup_{s \leq t}\B(\xi(s) + f(s),r) \right)] \right).
\]
In terms of the detection probabilities, Theorem~\ref{thm:wienersausage} then gives that
\[
\P(T_{\mathrm{det}}^f>t) \leq \P(T_{\mathrm{det}}^0>t),
\]
where $T_{\mathrm{det}}^0$ stands for the detection time when $u$ does not move at all. This means that the best strategy for $u$ to stay undetected for long time is to stay put.
This is an instance of the Pascal principle, which is discussed in \cite{DGRS} and \cite{MOBC} for a similar model in the discrete lattice.

\item
Let $(D_s)$ be a collection of open sets as in Theorem~\ref{thm:wienersausage}. We showed that
\[
\E[\vol(\cup_{s \leq t}(\xi(s) + D_s))] \geq \E[\vol(\cup_{s \leq t}(\xi(s) + B_s))],
\]
where the sets $(B_s)$ are balls as defined in Theorem~\ref{thm:wienersausage}. Does the stochastic domination inequality
\[
\P(\vol(\cup_{s \leq t}(\xi(s) + D_s)) \geq \alpha) \geq \P(\vol(\cup_{s \leq t}(\xi(s) + B_s)) \geq \alpha) \quad \forall \alpha
\]
also hold?

\item
We fix $D$ an open set in $\R^d$ and a measurable function $f$. We consider the Wiener sausage with drift $f$ determined by $D$, namely
$W^{D}_f(t) = \cup_{s \leq t} (D + \xi(s) + f(s))$. Is it true that the expected volume of $W^D_f$ is minimized when $f=0$?

\item
Let $f:\R_+ \to \R^d$ be a measurable function. Consider the convex hull of $C_f(t)=(\xi+f)[0,t]$. Is the expected volume of $C_f(t)$ minimized when $f=0$?

\end{enumerate}

\noindent{\Large\bf{Acknowledgements}}

We are grateful to Jason Miller, Alistair Sinclair and Alexandre Stauffer for useful discussions.

\bibliographystyle{plain}
\bibliography{biblio}

\end{document}